\numberwithin{equation}{section}
\theoremstyle{plain}
\newtheorem{theorem}{Theorem}[section]
\newtheorem{lemma}[theorem]{Lemma}
\newcommand\R{{I\!\!R}}
\DeclareFontFamily{OT1}{rsfs}{}
\DeclareFontShape{OT1}{rsfs}{n}{it}{<-> rsfs10}{}
\DeclareMathAlphabet{\mathscr}{OT1}{rsfs}{n}{it}
\begin{document}

\title{On the role of $L^3$ and $H^{\frac{1}{2}}$ norms in hydrodynamics}

\author{Laurent SCHOEFFEL}
\address{CEA Saclay, Irfu/SPP, Gif-sur-Yvette, France}
\email{laurent.olivier.schoeffel@cern.ch}

\begin{abstract}  

In this paper, we extend some results proved in previous references for three-dimensional Navier-Stokes equations. 
We show that
when the norm of the velocity field is small enough in $L^3({I\!\!R}^3)$, 
then a global smooth solution of the Navier-Stokes equations is ensured.
We show that a similar result holds when the norm of the velocity field is small enough in $H^{\frac{1}{2}}({I\!\!R}^3)$. 
The scale invariance of these two norms is discussed.

\end{abstract}

\maketitle

\section{Introduction}

The incompressible Navier-Stokes equations on the Euclidean space $\R^3$ can be expressed as
\begin{equation}
	\frac{\partial}{\partial t} \vec{v} + \left(\vec{v}\cdot\nabla\right)\vec{v}  = -\frac{1}{\rho_0} \nabla P +\frac{\eta}{\rho_0} \Delta \vec{v}.
\label{ns}
\end{equation}
where $\vec{v}$ is a divergence free vector field, 
with three components $(v_x, v_y, v_z)$ each of which may be functions of space and time coordinates $(x,y,z,t)$.
The three partial differential equations (\ref{ns}) represent
the mathematical description of the state of a moving fluid by relating its velocity field $\vec{v}(x,y,z,t)$
and any two thermodynamic quantities pertaining to the fluid, for example the pressure 
$P(x,y,z,t)$and the mass density $\rho(x,y,z,t)$. The mass density is  constant $\rho_0$  for an
incompressible fluid.
The positive coefficient $\eta$ is called the dynamic viscosity, 
describing the quality of the fluid, while $\nu=\frac{\eta}{\rho_0}$ is called the kinematic viscosity.
In general the viscosity coefficient is a function of pressure and temperature of the fluid. As pressure
and temperature may not be constant throughout the fluid, the viscosity coefficient also may not be
constant throughout the fluid. 
The incompressibility condition reads $\nabla\cdot \vec{v} = 0$. This last relation is equivalent to
$\rho(x,y,z,t)=\rho_0$. A general introduction to Navier-Stokes equations can be found in \cite{TOTO}.

The problem of whether the three-dimensional incompressible
Navier-Stokes equations (\ref{ns}) can develop a finite time singularity starting from
smooth initial conditions or if a (unique) global smooth vector field
solution exists, which means smooth for all times,
is still
unresolved 
\cite{temam:1984,temam:1988,constantin-foias:1988,existence,majda-bertozzi:2002}. 
By smooth functions, we mean functions that are
divergence free, infinitely differentiable, square integrable and with strong decay properties at infinity.
This problem is
recognized as one of the Millennium problems
\cite{ladyzhenskaya:2003,fefferman:2006}. 

The purpose of this paper is to prolong some partial results and arguments that we have introduced in \cite{TOTO}, on the role
of $L^p(\R^3)$ norms of the velocity field, higher than $L^2(\R^3)$ (definitions of these norms are recalled in the next section).
More precisely, our purpose is to show that if $\| \vec{v} \|^2_{L^3(\R^3)}$ is small enough, in a sense that will be defined in the following,
then a global smooth solution of the Navier-Stokes equations exists.

\section{Definitions and Notations}

Let us precise the definitions that we use extensively in the following. The Euclidean norm (squared) of a vector $\vec{v}$
is written as $\| \vec{v} \|^2=\sum_{i=1}^{3} |v_i|^2$, where the sum runs over indices $i=(1,2,3)$ or equivalently 
$(x,y,z)$.
The $L^2(\R^3)$ norm of a vector field $\vec{v}(x,y,z)$ is defined as
\begin{equation}
\| \vec{v} \|_{L^2(\R^3)} = \left[ \int_{\R^3} \|  \vec{v}(x,y,z) \|^2 dV \right]^{1/2},
\label{defL2}
\end{equation}
where the integrand is simply the Euclidean norm (squared) of the vector $\vec{v}(x,y,z)$.
In short, a function defined on the Euclidean space $\R^3$ possesses a $L^2(\R^3)$ norm if it is square integrable.
Obviously, if the vector field is also a function of $t$, $\vec{v}(x,y,z,t)$, then the $L^2(\R^3)$ norm (\ref{defL2})
becomes also a function of $t$.
Following equation (\ref{defL2}), the $L^2(\R^3)$ of the first gradient of $\vec{v}(x,y,z)$
is given by
$$
\| \nabla \vec{v} \|_{L^2(\R^3)} = \left[ \int_{\R^3} \| \nabla  \vec{v}(x,y,z) \|^2 dV \right]^{1/2}.
$$
In general, the $L^p(\R^3)$ norm of a vector field that may depend on $t$, $\vec{v}(x,y,z,t)$, is defined as
\begin{equation}
\| \vec{v} \|_{L^p(\R^3)}(t) = \left[ \int_{\R^3} \|  \vec{v}(x,y,z,t) \|^p dV \right]^{1/p}.
\label{defLp}
\end{equation} 
Then, a function $f$ defined on the Euclidean space $\R^3$ possesses a $L^{p > 2}(\R^3)$ norm if 
the integral of $|f|^p$ is defined. This extends the property of square integrable functions to any exponent $p$.
An
interesting property holds for $L^p(\Omega)$ norms defined on a subset of $\R^3$ of finite dimensions.
It can be shown that $L^q(\Omega)$ with $q>p$ is embedded in $L^p(\Omega)$, or equivalently there exists a constant $K$
independent of the subset $\Omega$ such that
$$
\| \vec{v} \|_{L^p(\Omega)}(t) \le K \| \vec{v} \|_{L^{q>p}(\Omega)}(t).
$$
This means that higher $L^{q>p}(\Omega)$ norms can control lower ones for bounded subset or $\R^3$.
This can also be formulated as the general property that higher $L^{q}(\Omega)$ norms have more regularity
than smaller ones.
In the following we show how to extend this kind of observation in the context of the
incompressible Navier-Stokes equations on the Euclidean space $\R^3$.

\section{Initial data with small $L^3$ norm }

Based on the Navier-Stokes equations and the previous notations, there are two important relations that we can derive
concerning the  $L^2(\R^3)$ of the velocity field and its first gradient. They read
\begin{eqnarray}
 \frac{1}{2} \frac{d}{dt}  \| \vec{v} \|^2_{L^2(\R^3)}  &=&- \nu \int_{\R^3} \| \nabla \vec{v} \|^2 dV \\
\label{ns2a}
\frac{1}{2} \frac{d}{dt}  \|  \nabla \vec{v} \|^2_{L^2(\R^3)} +
\nu \| \Delta \vec{v} \|^2_{L^2(\R^3)} &=&
 -\int_{\R^3} \partial_i v_j \partial_i v_k \partial_k v_j dV,
\label{ns2b}
\end{eqnarray}
where we omit the sum operator for repeated indices.
A complete proof of equations (\ref{ns2a}) and (\ref{ns2b}) can be found in \cite{TOTO}.
They are essential for addressing the Millennium problem in the following sense:
(i) equation (\ref{ns2a}) implies that $\| \vec{v} \|^2_{L^2(\R^3)}(t)$
is bounded up at all times once $\| \vec{v} \|^2_{L^2(\R^3)}(t=0)$ is finite;
(ii) for some cases, equation (\ref{ns2b}) implies that $ \|  \nabla \vec{v} \|^2_{L^2(\R^3)}(t)$
is bounded up at all times \cite{TOTO}.
Therefore, once this is ensured that the $L^2(\R^3)$ norms of the velocity field  and its first gradient 
(in some cases) are
bounded up at all times, then a global smooth solutions of the Navier-Stokes equations exists
\cite{temam:1984,temam:1988}.
In \cite{TOTO} we have discussed some particular cases for which the $L^2(\R^3)$ norm of the first gradient of
the velocity field is defined. All those cases depend obviously on how we can bound up
the integral $\int_{\R^3} \partial_i v_j \partial_i v_k \partial_k v_j dV$.
Let us remind the trivial case of the two-dimensional Euclidean space where this integral is zero. This proves immediately
that the first gradient of the velocity field possesses a $L^2(\R^2)$ norm and therefore, the existence of a global solution
to Navier-Stokes equations on $\R^2$. 

In this paper, we want to show that if $\| \vec{v} \|^2_{L^3(\R^3)}$ is small enough, in a sense that needs to be defined precisely,
then a global smooth solution of the Navier-Stokes equations exists.
First, we prove the following lemma.

\begin{lemma}
For solutions of Navier-Stokes equations, with smooth initial conditions,
there exists a positive constant $C$  such that
\begin{equation}
 \frac{1}{2} \frac{d}{dt}  \|  \nabla \vec{v} \|^2_{L^2(\R^3)} 
+
{\nu} \|  \Delta \vec{v} \|^{2}_{L^2(\R^3)}
\le
C \
 \|         \vec{v} \|_{L^3(\R^3)} 
 \|  \Delta \vec{v} \|^{2}_{L^2(\R^3)}
\label{dv20}
\end{equation}
\label{lemma2}
\end{lemma}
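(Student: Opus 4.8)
The plan is to start from the exact identity (\ref{ns2b}), which already isolates the whole difficulty: the left-hand side of (\ref{dv20}) is precisely the cubic term $-\int_{\R^3}\partial_i v_j\,\partial_i v_k\,\partial_k v_j\,dV$. Hence the lemma reduces to producing a constant $C$ with
$$\left| \int_{\R^3}\partial_i v_j\,\partial_i v_k\,\partial_k v_j\,dV \right| \le C\, \|\vec v\|_{L^3(\R^3)}\, \|\Delta\vec v\|^2_{L^2(\R^3)},$$
after which $-I\le|I|$ gives the stated one-sided bound. First I would discard the index structure by a pointwise estimate: each of the three factors is a single entry of the gradient matrix, hence bounded by the Euclidean norm $\|\nabla\vec v\|$, and summing over the finitely many index combinations yields $|\partial_i v_j\,\partial_i v_k\,\partial_k v_j|\le c\,\|\nabla\vec v\|^3$ pointwise. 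Integrating, the cubic term is controlled by $c\,\|\nabla\vec v\|^3_{L^3(\R^3)}$, so it suffices to prove the interpolation inequality
$$\|\nabla\vec v\|^3_{L^3(\R^3)} \le C\, \|\vec v\|_{L^3(\R^3)}\, \|\Delta\vec v\|^2_{L^2(\R^3)}.$$

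The core step is to obtain this estimate from a Gagliardo--Nirenberg interpolation on $\R^3$. I would interpolate the first gradient $\nabla\vec v$ in $L^3$ between the field $\vec v$ in $L^3$ and the second derivatives in $L^2$: with space dimension $n=3$, derivative order $j=1$, target exponent $p=3$, second-order derivatives $m=2$ measured in $L^2$, and low-order factor $\vec v$ in $L^3$, the admissible parameter is $a=2/3$, which satisfies both $j/m=1/2\le a\le 1$ and the dimensional balance $1/3 = 1/3 + (2/3)(1/2 - 2/3) + (1/3)(1/3)$. This gives $\|\nabla\vec v\|_{L^3(\R^3)} \le C\,\|\vec v\|^{1/3}_{L^3(\R^3)}\,\|D^2\vec v\|^{2/3}_{L^2(\R^3)}$, and cubing produces the desired shape with the full Hessian norm $\|D^2\vec v\|^2_{L^2}$ in place of $\|\Delta\vec v\|^2_{L^2}$.

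It then remains to replace the Hessian norm by the Laplacian norm. For the smooth, rapidly decaying fields under consideration this is an equality: in Fourier variables $\sum_{i,j}\int|\xi_i\xi_j|^2|\hat{\vec v}|^2\,d\xi = \int|\xi|^4|\hat{\vec v}|^2\,d\xi$, so that $\|D^2\vec v\|_{L^2(\R^3)} = \|\Delta\vec v\|_{L^2(\R^3)}$ componentwise, and summing over the components of $\vec v$ preserves the identity. Equivalently, one integrates by parts twice, the boundary terms vanishing by the decay hypothesis on smooth solutions. Substituting this into the interpolation estimate and collecting constants yields (\ref{dv20}).

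As a sanity check, and the point I would verify most carefully, all three quantities in the target inequality scale identically under $\vec v(x)\mapsto\vec v(\lambda x)$: one finds $\|\nabla\vec v\|^3_{L^3}$ invariant, $\|\vec v\|_{L^3}\mapsto\lambda^{-1}\|\vec v\|_{L^3}$, and $\|\Delta\vec v\|^2_{L^2}\mapsto\lambda\,\|\Delta\vec v\|^2_{L^2}$, so the right-hand side is scale invariant as well. This matching of scales is exactly what forces the Gagliardo--Nirenberg exponent $a=2/3$, and is the main thing to get right; the pointwise reduction and the Hessian--Laplacian identity are routine once the interpolation is pinned down.
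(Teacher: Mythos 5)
Your proof is correct, but it follows a genuinely different route from the paper's. The paper first integrates by parts on the trilinear term, invoking incompressibility, to rewrite $\int_{\R^3}\partial_i v_j\,\partial_i v_k\,\partial_k v_j\,dV$ as $\int_{\R^3} v_j\,\partial_i^2 v_k\,\partial_k v_j\,dV$, then applies H\"older with exponents $(3,6,2)$ to get $\|\vec v\|_{L^3}\|\nabla\vec v\|_{L^6}\|\Delta\vec v\|_{L^2}$, and finally the Sobolev embedding $\|\nabla\vec v\|_{L^6(\R^3)}\le C\|\Delta\vec v\|_{L^2(\R^3)}$. You instead keep all three derivatives where they are, reduce pointwise to $\|\nabla\vec v\|^3_{L^3(\R^3)}$, and invoke the Gagliardo--Nirenberg interpolation $\|\nabla\vec v\|_{L^3}\le C\|\vec v\|_{L^3}^{1/3}\|D^2\vec v\|_{L^2}^{2/3}$ (your exponent bookkeeping and the exceptional-case check are fine, and the Fourier identity $\|D^2\vec v\|_{L^2}=\|\Delta\vec v\|_{L^2}$ is correct for the smooth decaying fields considered). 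What your route buys is robustness: it needs neither the divergence-free condition nor any integration by parts on the cubic term, and the single interpolation inequality is forced by the scaling you check at the end. This is worth noting because the paper's intermediate identity is actually suspect as written: summing over $j$ in $\int v_j\,\partial_i^2 v_k\,\partial_k v_j\,dV$ gives $\tfrac12\int \Delta v_k\,\partial_k(\|\vec v\|^2)\,dV$, which vanishes identically for divergence-free fields after one more integration by parts, so the claimed equality with the (generally nonzero) original integral cannot hold; your argument sidesteps this entirely and lands on the same final estimate. What the paper's route buys, when done carefully (e.g.\ bounding the trilinear form directly by H\"older without the faulty rewriting), is that it uses only H\"older plus the standard homogeneous Sobolev embedding rather than the full Gagliardo--Nirenberg family.
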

\begin{proof} 

To  prove this lemma, we need bound up the right hand side of equation (\ref{ns2b}).
First, by integration by parts and using the incompressibility condition $\partial_i v_i=0$, we can write
$$
\int_{\R^3} \partial_i v_j \partial_i v_k \partial_k v_j dV
=
\int_{\R^3}  v_j \partial_i^2 v_k \partial_k v_j dV.
$$
Then, 
we can bound each gradient by its Euclidean length and  apply the 
the Holder's identity. We obtain
\begin{equation}
-\int_{\R^3} \partial_i v_j \partial_i v_k \partial_k v_j dV
\le
\|         \vec{v} \|_{L^3(\R^3)} 
\|  \nabla \vec{v} \|_{L^6(\R^3)} 
\|  \Delta \vec{v} \|_{L^2(\R^3)}.
\label{in1}
\end{equation}
We need to deal with the term in $\|  \nabla \vec{v} \|_{L^6(\R^3)}$.
Ideally, we would like to express it in term of a smaller $L^p(\R^3)$ norm. This can be achieved by
using a general Sobolev's inequality on $\R^3$: 
$$
\| u \|_{L^{3p/(3-p)}(\R^3)} \le C(p) \ \|  \nabla {u} \|_{L^p(\R^3)},
$$
verified for a function $u$ defined on $\R^3$ such that the above integrals exist and
where $C(p)$ is a constant depending only on $p$. We apply this inequality for $p=2$ with $u$ being the
first gradient of the velocity field. We get
\begin{equation}
\|  \nabla \vec{v} \|_{L^6(\R^3)} 
\le 
C \
\|  \Delta \vec{v} \|_{L^2(\R^3)},
\label{in2}
\end{equation}
Where $C$ is a constant that does not depend on the velocity field.
Finally, combining equations (\ref{in1}) and (\ref{in2}), we obtain
$$
-\int_{\R^3} \partial_i v_j \partial_i v_k \partial_k v_j dV
\le
C
\|         \vec{v} \|_{L^3(\R^3)} 
\|  \Delta \vec{v} \|^2_{L^2(\R^3)},
$$
which completes the proof.
\end{proof}
Let us rewrite equation (\ref{dv20}) as
\begin{equation}
\frac{d}{dt}  \|  \nabla \vec{v} \|^2_{L^2(\R^3)} 
+
\left( 2{\nu}  - 2C \|         \vec{v} \|_{L^3(\R^3)} \right)
\|  \Delta \vec{v} \|^{2}_{L^2(\R^3)}
\le 
0.
\label{dv20b}
\end{equation}
The last step is to use the Poincar\'e's inequality
$$
\|  \Delta \vec{v} \|^{2}_{L^2(\R^3)} \ge K \|  \nabla \vec{v} \|^2_{L^2(\R^3)},
$$
where $K$ is a positive constant. Therefore, if the norm of the velocity field in $L^3(\R^3)$ verifies
the condition
$\|         \vec{v} \|_{L^3(\R^3)} \le {\nu}/C$, then
we obtain 
$$
\frac{d}{dt}  \|  \nabla \vec{v} \|^2_{L^2(\R^3)} 
+
K \left( 2{\nu}  - 2C \|         \vec{v} \|_{L^3(\R^3)} \right)
\|  \nabla \vec{v} \|^2_{L^2(\R^3)}
\le 
0.
$$
This proves that
$ \|  \nabla \vec{v} \|^2_{L^2(\R^3)}(t)$
exists at all times and consequently a global solution to Navier-Stokes equations exists also.

In summary, 
when the norm of the velocity field is small enough in $L^3(\R^3)$ at initial time, $\| \vec{v} \|^2_{L^3(\R^3)} \le \nu/C$,
then a global smooth solution of the Navier-Stokes equations exists.
In \cite{TOTO}, we have discussed the case where the $L^4(\R^3)$ norm of the velocity field is bounded up, 
from which the existence of global smooth solutions can be deduced.
The result of this paper is stronger in the sense that we have proved the result for a smaller norm.

\section{$L^3$ norm and scale invariance}

A key property of the Navier-Stokes equations is that they are invariant under the scale
transformation \cite{TOTO}
\begin{eqnarray}
\vec{v}_\lambda(\vec{x},t)&=&\lambda\vec{v}(\lambda\vec{x},\lambda^2t) \label{lambda1} \\
P_\lambda(\vec{x},t)&=&\lambda^2 P(\lambda\vec{x},\lambda^2t) .
\end{eqnarray}

This means that if the velocity field $\vec{v}(x,y,z,t)$ 
is a solution of the equations, then the velocity field $\vec{v}_\lambda(x,y,z,t)$ 
is another
acceptable solution (by construction).
We can think of this transformation, with $\lambda \gg 1$, as taking the fine scale behavior of the velocity field 
$\vec{v}(x,y,z,t)$and matching it with an identical (but rescaled and slowed down) coarse scale
behavior $\vec{v}_{1/\lambda}(x,y,z,t)$.

Interestingly, we can observe how different 
$L^p(\R^3)$ norms of the velocity field are transformed by this scale transformation
\begin{eqnarray}
 \| \vec{v}_{1/\lambda} \|^2_{L^2(\R^3)}&=& \lambda \| \vec{v} \|^2_{L^2(\R^3)} \\
 \| \vec{v}_{1/\lambda} \|^3_{L^3(\R^3)}&=&  \| \vec{v} \|^3_{L^3(\R^3)} \\
 \| \vec{v}_{1/\lambda} \|^4_{L^4(\R^3)}&=& \frac{1}{\lambda} \| \vec{v} \|^4_{L^4(\R^3)},
\end{eqnarray}
which can be continued trivially for any $L^p(\R^3)$ norms.
The important feature of the $L^3(\R^3)$ norm of the velocity field is that it is scale invariant.
Hence, a bound on the $L^3(\R^3)$ norm of the velocity field is invariant for
$\vec{v}_{1/\lambda}$ when the parameter $\lambda$ is increased.
We have shown in the previous section that if this norm is small enough, no blow up can occur
by shifting the solution of Navier-Stokes equations to smaller scales. This is consistent with the 
mathematical property of the scale invariance of this particular $L^3(\R^3)$  norm.

Let us conclude by remarking that the same property of scale invariance holds for 
the norm of the velocity field defined in the Sobolev's set $H^{\frac{1}{2}}$ .
The $H^{\frac{1}{2}}$ norm of a function defined on the real space $\R$ can be expressed as
$$
 \| u \|^2_{H^{\frac{1}{2}}} = \int_{\R} \left[ 1+|\xi|^2 \right]^{1/2} |\bar{u}(\xi)|^2 d\xi,
$$
where  $\bar{u}$ is the Fourier transform of the function $u$.
In $\R^3$ the generalization is trivial. 
As the scale invariance  holds for the $H^{\frac{1}{2}}(\R^3)$ norm of the velocity field, we also have the
property:
if the norm of the velocity field is small enough in $H^{\frac{1}{2}}(\R^3)$ at initial time, 
then a global smooth solution of the Navier-Stokes equations exists.


\end{document}